\DeclareMathAlphabet{\mathcal}{OMS}{cmsy}{m}{n}
\SetMathAlphabet{\mathcal}{bold}{OMS}{cmsy}{b}{n}
\title{\LARGE \bf
Performance analysis and optimization of power systems with spatially correlated noise
}
\author{Taouba Jouini$^{1}$, Zhiyong Sun$^{2}$ 
\thanks{*This work has received funding from the European Research Council (ERC) under the European Union's Horizon 2020 research  and innovation program (grant agreement No: 834142).}%
\thanks{$^{1}$Taouba Jouini is with the Department of Automatic Control, LTH, Lund University,
        Ole Römers väg 1,  22363 Lund, Sweden. $^{2}$Zhiyong Sun is with Department of Electrical Engineering,  Eindhoven University of Technology, the Netherlands.
        E-mails:
        \tt\small taouba.jouini@control.lth.se, z.sun@tue.nl.}}%
\newcommand\oprocendsymbol{\hbox{$\blacksquare$}}
\newcommand{\dd}[0]{\mathrm d}
\newcommand\oprocend{\relax\ifmmode\else\unskip\hfill\fi\oprocendsymbol}
\newcommand{\real}[0]{\mathbb R}
\providecommand{\norm}[1]{\lVert#1\rVert}
\newtheorem{theorem}{Theorem}[section]
\newtheorem{lemma}[theorem]{Lemma}
\newtheorem{corollary}[theorem]{Corollary}
\newcommand{\tb}[0]{\color{blue}}
\newcommand\rout{\bgroup\markoverwith{\textcolor{red}{/}}\ULon} 
\begin{document}

\maketitle
\thispagestyle{empty}
\pagestyle{empty}

\begin{abstract}
 Based on stochastic differential equations (SDEs), we analyse the overall performance of heterogeneous power systems network, subject to spatially distributed and correlated noise with random initial conditions. We determine bounds on the $\mathcal{H}_2$ norm of the heterogeneous system based on a closed-form of the norm of the homogeneous power system. Then, we formulate possible scenarios for performance optimization {and link these to applications for network design and control problems in power systems}. Our results are corroborated by numerical simulations from Kundur's four-machine two-area network after adaption to our setup.
\end{abstract}


\section{INTRODUCTION}

The electrical grid is witnessing  major changes in its planning and operation, mainly driven by economic and environmental concerns \cite{ackermann2017paving}.
A better understanding of today's deployment of renewable resources in power systems will necessarily go through the analysis of the ramifications of the integration of power electronics on grid stability. During the gradual retirement of synchronous machines and its replacement by DC/AC converters in closed loop with efficient controllers, designed to emulate the electro-mechanical interaction inherently present in synchronous machines, mixed power generation seems to be inevitable, and the operation of DC/AC converters is conducted in a first step in the presence of synchronous machines \cite{markovic2019understanding}.

In this context, Stochastic Differential Equations (SDEs) have gained more and more attention in the literature of power systems, motivated by its potential applications in modeling disturbances ubiquitous in real-life power grid~\cite{milano2013systematic}. A systematic and generic approach on how to model power systems as continuous-time SDEs subject to independent Wiener processes was developed in \cite{milano2013systematic}. Gaussian processes have been adopted to model power fluctuations in~\cite{podolsky2013random}. In \cite{guo2019performance}, the performance and stability analysis of low-inertia power grid were considered with both additive and multiplicative noises which model stochastic inertia behavior. Examples included also non-Gaussian disturbance for wind power uncertainty \cite{chen2019optimal}.

To assess the effect of disturbances on the system stability, different approaches have been adopted. We distinguish two main {avenues}, related to convergence properties of the SDE solution. First, {\em strong} convergence, i.e., with respect to a particular stochastic process trajectory, has been studied extensively in the literature that ranges from stability in probability, almost sure (exponential) synchronization~\cite{russo2018synchronization}, transient stability probability, such as the probability of frequency synchronization or voltage collapse \cite{milano2013systematic}. Second, {\em weak} convergence, i.e., with respect to statistical properties of the solution, has also attracted interest in the analysis of power systems performance.  
A variety of recent considerations in the literature deals with weak convergence via quantitative understanding of the disturbance-to-output behavior. One possibility is $\mathcal{H}_\infty$ norm \cite{weiss2004h}, primarily concerned with peaks in the frequency response following an event and the location of the worst case disturbance. Step response notions like, frequency nadir defined as the worst frequency drop, and the rate of change of frequency (RoCoF) as  the maximal slope of frequency change during transients, are also common metrics in the study of the effect of disturbances on frequency stability. 

While Linear Quadratic Regulator (LQR) formulations \cite{markovic2018lqr} remain sensitive to the choice of the time horizon apparent in the objective function \cite{poolla2019placement}, one attractive approach goes by $\mathcal{H}_2$ norm calculation \cite{wu2015input,tegling2015price,poolla2017optimal}. $\mathcal{H}_2$ norm indicates root mean square or average sensitivity of the system performance to disturbances and is derived from a generalized Lyapunov equation \cite{guo2019performance}. For this, $\mathcal{H}_2$ norm approach has been leveraged, at many occasions in the form of an input-to-output measure, e.g. for the total resistive losses incurred in returning a power network of identical generators with resistive and inductive lines, to a synchronous state \cite{tegling2015price}, common local and inter-area oscillations \cite{wu2015input}, and inertia and damping allocation, with specified capacity and budget constraints for DC/AC converters \cite{poolla2017optimal,poolla2019placement}.

Most of the analytical results consider {\em homogeneous} power system  networks, where all the machines or generation units  are identical and  with uncorrelated white noise of { unit} variance. Only recently, {\em heterogeneous} inertia and damping are considered, within a stochastic setting in \cite{paganini2019global} under mild restrictions, which consist in  machine dynamics  proportional to nominal rating and fixed damping to inertia ratio of all the machines. In a second occasion, two differently parameterized behaviors (grid-forming and grid-following) of closed-loop DC/AC converters have been investigated and compared with simulative examples in \cite{poolla2019placement}, for optimal inertia and damping allocation, but not simultaneously.
In \cite{curi2017control}, the stability of a mixed-generation comprising synchronous machines together with DC/AC converter based on model reduction, was considered but in a deterministic setup that does not include stochastic disturbances.

  Compared to the existing literature, the contributions of this paper can be summarized as follows. We first derive an SDE model of heterogeneous power networks, extending previous models to more realistic setups, where parametric heterogeneity and {spatial} noise correlation with random initial conditions, enter the picture. For this, the generation units (which can be thought of as, either synchronous machines or closed-loop DC/AC converters, or both) have non-uniform inertia and damping. The normally distributed and {spatially} correlated noise {models the practical setup, where a generation unit is affected by its neighbor's noise and hence propagated according to graph Laplacian, e.g. cascaded failures, network outage, and voltage collapse \cite{kundur1994power}}. 
Our second main result consists in the derivation of bounds on the $\mathcal{H}_2$ norm of the heterogeneous power system, based on a closed-form of the norm of the homogeneous power system model. {We then demonstrate its utility by formulating and extending important optimization scenarios in design and control of power system networks}. 

{ In fact,} in a mixed-generation framework, where the interplay between DC/AC converters in closed-loop with a machine emulating controller (e.g., droop  \cite{simpson2013synchronization} and matching  control \cite{arghir2018grid}, virtual synchronous machines \cite{bevrani2017virtual}) and synchronous machines is investigated, we pose possible scenarios for performance optimization, with respect to the derived $\mathcal{H}_2$ norm. We formulate an optimal { susceptance} problem to specify the optimal susceptances in a mixed generation (DC/AC converters and or synchronous machines). We additionally  present an optimal { node-edge assignment problem for an optimal pairing of generation units that  improves the system performance in the $\mathcal{H}_2$ norm sense.}  Finally, we extend the inertia and damping allocation problem from \cite{poolla2017optimal} of DC/AC converters, subject to operational and budget constraints. We validate our analytical results for optimal damping and inertia, on  adapted setup from Kundur's four-machine two-area system \cite{kundur1994power}.

The remainder of this paper unfurls as follows. Section~\ref{sec:modeling} formulates and derives the heterogeneous SDE power systems model, starting from the classical swing equation. In Section \ref{sec:gramians}, we determine a closed form of system  $\mathcal{H}_2$ norm, and interpret its dependence on network parameters. In Section \ref{sec:optim}, we formulate  possible optimization scenarios with respect to system norm accounting for  optimal susceptances and network topology, as well as an  extension of inertia and damping allocation for proper deployment of DC/AC converters in the presence of synchronous machines. Finally, Section \ref{sec:sims} validates our results by numerical simulations of an adapted Kundur's 4-machine 2-area system.

\section{Modeling  of power systems with correlated noise}
\label{sec:modeling}
 We consider a heterogeneous power systems model, defined by a graph ${G}=(\mathcal{V},\mathcal{E})$ of an undirected network, where  $\mathcal{V}$ is the set of $n$ heterogeneous generation units (i.e., buses), where inductive load with constant susceptance is considered and absorbed in the lines (e.g. after Kron reduction \cite{dorfler2012kron}). Let $\mathcal{E}$ be the set of $m$ edges (purely inductive lines) with weight susceptance $b_{e}>0, \, e\in\mathcal{E}$.
We denote by $\mathcal{B}\in\real^{n\times m}$ the incidence matrix of the graph ${G}$, and by $\mathcal{N}_i$  the neighbor set of the $i$-th generation unit (DC/AC converter  in closed-loop with droop control or synchronous machines). The voltage magnitude $V_i$ at the $i$-th bus is assumed to be constant and equal to one per unit. Under the approximation of quasi-stationary steady state, the swing equation of the $i$-th generation with inertial constant $m_i>0$, damping coefficient $d_i>0$, and (virtual) voltage phase angle $\theta_i\in\real$ describes the $i$-th node dynamics as follows,
\begin{align}
\label{eq:unit-i}
m_i \ddot\theta_i+ d_i \dot\theta_i = P_{m,i}-P_{e,i}+\eta_i\,,
\end{align}
where $P_{m,i}\in\real$ is constant mechanical power, and $P_{e,i}=\sum_{j\in \mathcal{N}_i} b_{ij} V_i V_j \sin(\theta_i-\theta_j)=\sum_{j\in \mathcal{N}_i} b_{ij}\sin(\theta_i-\theta_j)$ is the electrical power injected from $i$-th generation into the neighbor set $\mathcal{N}_i$ and vice versa. The disturbance $\eta_i(t),\, t>0$ stands for load fluctuations in renewable generation for DC/AC converters, or generator outages for synchronous machines. 

{The graph G is described by} the weighted Laplacian matrix $L=\mathcal{B}\, \Gamma\, \mathcal{B}^\top\in\real^{n\times n}$, in function of $\Gamma=\text{diag}(b_{e})_{e\in\mathcal{E}}\in\real^{m\times m}$, with eigenvalues of a non-decreasing order $\lambda_1(L)=0<\lambda_2(L)\leq\dots \leq\lambda_n(L)$. 
Let $\dd w_i(t)$ be the increment of the $i$-~th {standard Wiener process $w_i(t)$} that results in the disturbance $\eta_i(t)=\frac{\dd w_i(t)}{\dd t}$. {The disturbance $\eta_i(t)$, resulting from the process $w_i(t)$ at node $i$ is correlated to  the disturbance $\eta_j(t)$, resulting from $w_j(t)$, for node $j \in \mathcal{N}_i$ with given covariance matrix $Q= \Sigma \Sigma^\top, \;\Sigma=\gamma^{1/2} L^{1/2}$, where $\gamma>~0$ models the intensity of the noise diffusion. For more general noise diffusion functions, see  \cite{russo2018synchronization}}. 
For identical and uncorrelated noise, the disturbance in {\eqref{eq:unit-i}} corresponds to that adopted in \cite{tegling2015price, poolla2017optimal}.


Let $\omega^*>0$ be synchronous frequency and $\theta^{*}\in\real^n$ be the angles at steady state. After a linearization around a stable (i.e., synchronous) operating point $[\theta^{*\top},\omega^{*}\mathds{1}^\top_n]^\top$, the electrical power can be approximated by $P_{e,i}\approx \sum_{j\in\mathcal{N}_i}b_{ij}(\theta_i-\theta_j)$, and we obtain the small-signal power systems model described by the following linear SDE, 
\begin{align}
\label{eq: lin-swing}
\begin{bmatrix}
	\dd\theta \\  \dd \omega
	\end{bmatrix} &=\begin{bmatrix}
	0 & I \\ -M^{-1}L & -M^{-1} D
	\end{bmatrix} \begin{bmatrix}
	\theta \\ \omega
	\end{bmatrix} \dd t
	+\begin{bmatrix}
	 0\\  M^{-1} { \gamma^{1/2} \; L^{1/2}}
	\end{bmatrix} \dd {W}, \\
y&=\begin{bmatrix}
{L}\, \theta \\ \omega
\end{bmatrix}, \,\begin{bmatrix}
\theta_{0}\\  \omega_{0}
\end{bmatrix}\sim \mathcal{N}(\xi_0,\Sigma_0\Sigma_0^\top),\nonumber 
\end{align}
where we define the angles vector $\theta=\begin{bmatrix}
\theta_1\dots \theta_n
\end{bmatrix}^{\top}\in\real^{n}$, frequency vector $\omega=\begin{bmatrix} \omega_1 \dots  \omega_n\end{bmatrix}^\top\in\real^{n}$, and { standard} Wiener process increments vector  $\dd {W}=\left[\begin{smallmatrix}\dd w_1 \dots \dd w_n \end{smallmatrix}\right]^\top\in\real^{n}$. The vector ${P}_{m}=\begin{bmatrix}
{P}_{m,1} \dots {P}_{m,n}
\end{bmatrix}^\top\in\real^{n}$ represents constant (mechanical) input that can be lumped into the increments $\dd W$ as in \cite{bamieh2013price}. The matrices  $M, D\in\real^{n\times n}$, are positive diagonal matrices whose entries are the non-uniform inertia and damping values, each denoted by $m_i$ and $d_i$ for $i=1\dots n$. The { identity} matrix $I$ { is defined with} appropriate dimensions.

The output $y\in\real^{2n}$  represents phase cohesiveness and frequency drift. The vector $\begin{bmatrix} \theta_0^\top & \omega_0^\top \end{bmatrix}^\top$ lumps the initial states that are normally distributed  random variables with mean vector $\xi_0=\mathds{E}[\begin{bmatrix} \theta^\top_0 & \omega^\top_0 \end{bmatrix}^\top]$, and covariance matrix $Q_0=\Sigma_0 \Sigma_0^\top$, where $\mathds{E}[\cdot]$ denotes the expectation operator. Moreover, we  assume that the initial conditions are independent of the Wiener processes. 

The linearized Swing dynamics in { \eqref{eq: lin-swing}} describe both synchronous machines, and DC/AC converters, through machine emulation controllers, whose prominent feature is to endow DC/AC converters with dynamics of synchronous machines \cite{arghir2018grid}.

\section{Performance analysis}
\label{sec:gramians}
We derive bounds on the system $\mathcal{H}_2$-norm { for the power system model \eqref{eq: lin-swing}} based on insights provided by the norm of the homogeneous power system and in particular, its dependence on key network parameters: inertia, damping {and noise diffusion}.
The $\mathcal{H}_2$ norm of system \eqref{eq: lin-swing} is expressed as a function of the controllability Gramian $P$ by,
\begin{align}
\label{eq:def-H2}
\! \! \! \!\!\!\!\!\!\norm{\mathcal{G}}_2^2=\lim_{t\to\infty}\mathds{E}\{y{(t)}^\top y{ (t)}\}=\text{trace}(C^\top\, P\, C)\, ,
\end{align}
{ where  we denote $y(t)=C\, X(t)$ with $X=\begin{bmatrix}
\theta^\top & \omega^\top \end{bmatrix}^\top$ and  $C=\begin{bmatrix}
L & 0 \\ 0 & I
\end{bmatrix}$.}
This implies that, the system $\mathcal{H}_2$ norm is the trace of the controllability Gramian $P$ weighted by the output matrix $C$ { and satisfying $A P+A^\top P =-R\,R^\top$, where
\[
A=\begin{bmatrix}
    0 & I \\ -M^{-1} L & -M^{-1}D
 \end{bmatrix},\quad R=\begin{bmatrix}
     0 & M^{-1}{\gamma^{1/2} L^{1/2}}\end{bmatrix}^\top.
\]
}

\subsection{Special case: Homogeneous system parameters} 
Consider the continuous-time LTI system $\mathcal{G}_{hom}$ with the state-space representation in {\eqref{eq: lin-swing}} and  homogeneous parameters, that is, the inertia and damping are uniform and described by $M=m\cdot I$, and $D=d\cdot I $, with $m,d>0$.

For this special case, an explicit formula of the system $\mathcal{H}_2$ norm is given { by} the following lemma.

\begin{lemma}
\label{lem: main-result}
Consider the power networks in { \eqref{eq: lin-swing}} with homogeneous inertia and damping values described by $\mathcal{G}_{hom}$. The squared $\mathcal{H}_2$ norm defined in \eqref{eq:def-H2} is given by,  
\begin{align}
\label{eq:H2-norm}
\norm{\mathcal{G}_{hom}(m,d)}_2^2&= \frac{\gamma}{2 \, d} \mathlarger{\sum}_{i=2}^{n}  \left(\lambda^2_i(L) +\frac{ \lambda_i(L)}{ m}\right) 
\end{align}
\end{lemma}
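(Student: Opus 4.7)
The plan is to exploit the fact that in the homogeneous case all of $M$, $D$, and $L^{1/2}$ share an orthogonal eigenbasis, which allows a simultaneous block-diagonalization of the closed-loop system. Writing the spectral decomposition $L = U\Lambda U^\top$ with $\Lambda = \text{diag}(\lambda_1,\ldots,\lambda_n)$ and $\lambda_1 = 0$, and noting $L^{1/2} = U\Lambda^{1/2}U^\top$, the orthogonal change of coordinates $\tilde\theta = U^\top\theta$, $\tilde\omega = U^\top\omega$ (together with the rotated Wiener process $\tilde W = U^\top W$, which remains standard by orthogonality of $U$) decouples \eqref{eq: lin-swing} into $n$ independent two-dimensional subsystems indexed by $i=1,\ldots,n$, each of the form
\[
A_i = \begin{bmatrix} 0 & 1 \\ -\lambda_i/m & -d/m \end{bmatrix},\quad R_i = \begin{bmatrix} 0 \\ \gamma^{1/2}\lambda_i^{1/2}/m \end{bmatrix},\quad C_i = \begin{bmatrix} \lambda_i & 0 \\ 0 & 1 \end{bmatrix}.
\]

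Since the trace in \eqref{eq:def-H2} respects this block-diagonal structure, the overall squared norm is the sum of modal contributions $\text{trace}(C_i^\top P_i C_i)$, where each $P_i$ solves the $2\times 2$ Lyapunov equation $A_i P_i + P_i A_i^\top = -R_iR_i^\top$. For $i\geq 2$, the characteristic polynomial $ms^2 + ds + \lambda_i$ has roots in the open left half-plane, so $A_i$ is Hurwitz and $P_i$ is unique; matching entries gives $P_i = \text{diag}\bigl(\gamma/(2d),\,\gamma\lambda_i/(2dm)\bigr)$, and weighting by $C_i^\top C_i = \text{diag}(\lambda_i^2,1)$ immediately produces the modal contribution $\frac{\gamma}{2d}(\lambda_i^2 + \lambda_i/m)$. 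Summing over $i=2,\ldots,n$ yields \eqref{eq:H2-norm}.

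The main obstacle is the $i=1$ mode: the full matrix $A$ is only marginally stable, with a zero eigenvalue along the direction of $\mathds{1}_n$ spanning the kernel of $L$, so the observability Gramian equation is not uniquely solvable on the whole state space and the standard Hurwitz-based Lyapunov formula does not apply verbatim. However, $\lambda_1 = 0$ simultaneously annihilates the noise input ($R_1 = 0$) and the position output ($C_1$ has a zero in the $\tilde\theta_1$ slot), while the residual frequency component satisfies $\dot{\tilde\omega}_1 = -(d/m)\tilde\omega_1$ and therefore decays to zero regardless of initial conditions. Once one argues carefully that this mode contributes nothing to $\lim_{t\to\infty}\mathds{E}[y^\top y]$, the problem collapses to $n-1$ decoupled $2\times 2$ Lyapunov computations whose solutions are the closed-form entries above.
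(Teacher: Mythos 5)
Your proof is correct and follows essentially the same route as the paper: diagonalize $L$, decouple into $n$ second-order subsystems, solve each $2\times 2$ Lyapunov equation to get $P_i = \mathrm{diag}(\gamma/(2d),\,\gamma\lambda_i/(2dm))$, and discard the $\lambda_1=0$ mode as non-contributing. Your handling of the marginal mode (noting both $R_1=0$ and the exponential decay of $\tilde\omega_1$) is in fact slightly more explicit than the paper's appeal to uncontrollability, but it is the same argument in substance.
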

\begin{proof}
 Note that the marginal stability of the system matrix~$A$, (see  \cite{bamieh2013price, poolla2017optimal}) guarantees the existence of a unique positive semi-definite matrix, as solution to the Lyapunov equation 
 $ A\, P+P \, A^\top =- R\, R^\top$, which holds for the system~$\mathcal{G}_{hom}$ with homogeneous inertia and damping matrices.
By spectral decomposition, we write $\mathcal{H}_2$ norm of $\mathcal{G}_{hom}$, as the sum of the norms associated to each individual {mode} after {modal} coordinate transformation.

For this,  we consider the homogeneous system $\mathcal{G}_{hom}$ and the following system: 
  \begin{align}
  \label{eq: transf}
 \dd \Theta&=\left[\begin{smallmatrix} 0 & I \\ -\frac{1}{m} \Lambda & - \frac{d}{m} \cdot I  \end{smallmatrix}\right] \Theta \,\dd t+  \left[\begin{smallmatrix} 0  \\   \frac{1}{m} \gamma^{1/2}\Lambda^{1/2}  \end{smallmatrix} \right]\; \dd {\widetilde W},\\
  \widetilde y&=  \left[\begin{smallmatrix}  \Lambda  & 0 \\ 0 & I \end{smallmatrix}\right] \Theta\, ,
\nonumber  \end{align}
  where we introduce the coordinate transformation $\Theta=\left[\begin{smallmatrix}
  (V^\top  \theta)^{\top}, &   (V^\top  \omega)^{ \top} \end{smallmatrix}\right]^\top$, in which $V$ is an orthogonal matrix whose columns are right eigenvectors of $L$ and $\Lambda$ is a diagonal matrix whose diagonal entries $\lambda_i\geq 0,\quad i=1\dots n$, are  eigenvalues of $L$.
The transformed system \eqref{eq: transf}  has the same squared $\mathcal{H}_2$ norm of $\mathcal{G}_{hom}$ (see \cite{bamieh2013price}), with $ \dd \widetilde{ {W}}=V^\top\,  \dd {W}$ and $\widetilde{y}=\left[\begin{smallmatrix}
    V^\top & 0 \\ 0 & V^\top 
    \end{smallmatrix}\right]y$.  As a consequence, we obtain $n$- decoupled subsystems $A^i= \left[\begin{smallmatrix} 0 & 1 \\ -\frac{\lambda_i}{m} & -\frac{d}{m} \end{smallmatrix}\right]$ of second order.  We calculate the controllability Gramian $P^i $ for the $i-$th system, associated with ${r}^i=\begin{bmatrix}
    0 & \frac{1}{m} \gamma^{1/2} \lambda^{1/2}_i(L)
    \end{bmatrix}^\top$, which verifies $A^i \,  P^i  + P^i  \, A^{i\top}=-r^i \,  r^{i\top}$.  By solving the Lyapunov equation for  $ P^i =\left[\begin{smallmatrix}
p^i_1 & p^i_2 \\ p^i_2  & p^i_3
\end{smallmatrix}\right]$, we arrive at $p^i_1=\frac{\gamma}{2 \, d},\, p^i_2=0,\, p^i_3=\frac{\gamma\,  \lambda_i}{2\, d \, m}$.
It follows that $\text{trace}(( C^i)^\top P^i  C^i)=\lambda^2_i(L)\, p^i_1+p^i_3= \lambda_i^2(L) \frac{\gamma}{2 \, d}+\frac{\gamma\,  \lambda_i(L)}{2\, d \, m}$, with $C^i=\left[\begin{smallmatrix}
\lambda_i(L) & 0 \\ 0 & 1
\end{smallmatrix}\right]$. Since the mode $\lambda_1(L)=0$ is uncontrollable (by $k^\top_0 P=0,\;k^\top_0=\begin{bmatrix}  \mathds{1}^\top_n D &  \mathds{1}^\top_n M \end{bmatrix})$, and hence does not contribute to the system $\mathcal{H}_2$ norm, we find \eqref{eq:H2-norm}.
\end{proof}

\subsection{Interpretation and implications for heterogeneous case}
\label{subsec: interp}
The $\mathcal{H}_2$ norm in \eqref{eq:def-H2} is primarily concerned with the {\em overall} performance of system { \eqref{eq: lin-swing}}, and regarded as the energy amplification for the input at each generation, being a unit impulse.

For homogeneous setup, and as a direct consequence of Lemma \ref{lem: main-result}, the $\mathcal{H}_2$ norm in \eqref{eq:H2-norm} increases with noise { diffusion} parameter  $\gamma>0$, and decreases with damping $d>0$ and inertia $m>0$. 

In fact, the trace of the Gramian \eqref{eq:H2-norm} is inversely related to the average energy or average controllability in all directions in the state space. Note that, by rewriting \eqref{eq:H2-norm} as $\norm{\mathcal{G}_{hom}(m,d)}_2^2=\frac{1}{2} \sum_{i=2}^n  f_i(m,d)$, $f_i(m,d)= \frac{\gamma}{d}  \lambda_i(L) \left(\lambda_i(L) +\frac{1}{ m}\right)$, defines the average controllability centrality for the nodes. The nodes with least centrality minimize the $\mathcal{H}_2$ norm \cite{summers2015submodularity}.

By defining upper and lower bound for the inertia $\overline M=\max\{m_1,\dots ,m_n\},\,  \underline M=\min\{m_1,\dots ,m_n\}$ and damping $\overline D=\max\{d_1,\dots ,d_n\},\, \underline D=\min\{d_1,\dots ,d_n\}$, we can find an upper bound and a lower bound for the 
$\mathcal{H}_2$ norm of the heterogeneous power systems, and given by
\begin{align}
\label{eq:bound}
\!\norm{\mathcal{G}_{hom}(\overline M,\overline D) }_2^2\leq \norm{\mathcal{G}}_2^2 \leq \norm{\mathcal{G}_{hom}(\underline M,\underline D)}_2^2\;.
\end{align}
Note  that in general, it is not always possible to calculate the $\mathcal{H}_2$ norm in \eqref{eq:def-H2}, and one can use the upper bound provided in \eqref{eq:bound} to account for worst-case system performance, after a disturbance, while satisfying specific operation constraints. { This is demonstrated in the next section}.

\section{Performance optimization}
\label{sec:optim}
Motivated by the recently examined heterogeneous power system  models \cite{markovic2019understanding}  that consider a mixed-generation model, partitioned into DC/AC converters in closed-loop with a controller (e.g. droop control  \cite{simpson2013synchronization}) and synchronous machines (with eventually a governor control), we consider optimization problems {that} minimize the system $\mathcal{H}_2$ norm in \eqref{eq:def-H2} for the proper deployment of { mixed generation units} (DC/AC converters in the presence of synchronous machines).



%

{
\subsection{Scenario 1: Susceptance   optimization problem} 
For a given graph topology and in particular, a fixed node-incidence matrix $\mathcal{B}\in\real^{n\times m}$, we aim to determine the optimal susceptance matrix $\Gamma\in\real_+^{m\times m}$, and in particular the optimal allocation of the susceptance values $\Gamma_{ii}=b_{e}> 0,\, e\in\mathcal{E}$, along the edges to  optimize the system performance, along with power flow at steady state $P^*\in\real^{n}$ at all the generation units.  

For this, we utilize the upper bound on the $\mathcal{H}_2$ norm $\norm{\mathcal{G}}_2^2$ found in~\eqref{eq:bound}. Let $\theta^*_i\in\real$ denote well-known angles of the $i$-th generation unit at steady state. Then the optimization problem is formulated as, 
\begin{align}
\label{eq: op1}
 \min_{\Gamma,\,  P^*}  \quad  \frac{\gamma\; \underline D^{-1}}{2} \, & \left(\vert\vert \lambda(\mathcal{B}\, \Gamma\, \mathcal{B}^\top)\vert\vert_2^2 +{\underline M}^{-1}{\vert\vert\lambda(\mathcal{B}\, \Gamma\, \mathcal{B}^\top)\vert\vert_1}\right)\\ \nonumber
\text{subject to} & \quad  P^*=\mathcal{B}\,\Gamma\, \mathcal{B}^\top \theta^*, \quad  \text{[power balance]} \\ \nonumber
 & \quad  \Gamma_{ii}>0,\quad \Gamma_{ij}=0, \quad i,j\in\mathcal{V} \\ \nonumber
 & \quad \underline{b}_{e}  \leq \Gamma_{ii} \leq \overline{b}_{e} 
  \quad  \text{[capacity constraints]}
  \\ \nonumber
  & \quad \sum_{e\in\mathcal{E}} c_{e}(b_{e}) = K \quad  \text{[cost constraints]}
\end{align}
where $\vert\vert \cdot \vert\vert_2$ and $\vert\vert \cdot \vert\vert_1$ denote, respectively, the Euclidean $\ell_2$ norm and $\ell_1$ vector norm. The parameters  $\underline{b}_e$ and $\overline{b}_e$ are the minimal and  maximal values for the edge susceptances that representing operational capacity constraints.
Notice that, the power flow balance equality indicates that at steady state, the overall power input consisting of converter DC power and total power of synchronous machines sums up to zero. The cost $c_{e,i}(b_{e,i})$ is an increasing function of $b_{e,i}$ that accommodates operational cost of installing the susceptance $b_e$ at the $i$-th edge $e\in\mathcal{E}$, with $K>0$ being the total monetary budget.




\subsection{Scenario 2: Node-edge assignment problem} 
\label{sec:optimal_H2}
{Consider a mixed generation setup with a fixed number of DC/AC converters and machines. For a given number of transmission lines $m\in\mathbb{N}$, known susceptances $b_e, \; e\in~\mathcal{E}$, and angle values at steady-state denoted by $\theta^*\in\real^n$, we search for the optimal pairing of generation units $(i,j)=e,\; e\in\mathcal{E}$ with $i,j\in\mathcal{V}$ encoded in $\mathcal{B}\in\real^{n\times m}$ and power flow at steady state $P^*\in\real^{n}$, at all the generation units, that minimizees the upper bound on $\mathcal{H}_2$ norm in \eqref{eq:bound}. This can be formulated as follows,
\begin{align}
\label{eq: op2}
 \min_{\mathcal{B},\, P^*} \quad  \frac{\gamma\; \underline D^{-1}}{2} &\left(\vert\vert \lambda(\mathcal{B}\, \Gamma\, \mathcal{B}^\top)\vert\vert_2^2 +{\underline M}^{-1}{\vert\vert\lambda(\mathcal{B}\, \Gamma\, \mathcal{B}^\top)\vert\vert_1}\right)\\ \nonumber
\text{subject to} & \quad P^*= \mathcal{B} \Gamma \mathcal{B}^\top \; \theta^* \quad  \text{[power balance]}& \\ \nonumber
& \mathcal{B}=\left\{ \begin{array}{ll}
                  \mathcal{B}_{i,e}=1,\\ 
                  \mathcal{B}_{j,e}=-1,\; e=(i,j),\; i,j\in\mathcal{V}\\
                \mathcal{B}_{k,e}=0,\; k\notin \{i,j\}\\
                \end{array}
              \right.\; 
\end{align}
}
The optimization problem in \eqref{eq: op2} relies on  discrete combinatorics to choose a pair of nodes $(i,j)$ and relates to classical and well-known optimization scenarios in network topology design, see e.g., \cite{de2016growing}.



%
 
\subsection{Discussion and other optimization scenarios} 
Even though Scenarios 1 and 2 tackle the performance optimization  from two different angles, we can combine both formulations to obtain the following more general power network design problem using a min-max formulation,
\begin{align}
\label{eq: op3}
 \min_{\mathcal{B},\;\Gamma,\, P^*} \; \!\! \frac{n\,\gamma\, \underline D^{-1}}{2}\!\!&\left(\vert \vert\lambda(\mathcal{B}\, \Gamma \, \mathcal{B}^\top)\vert\vert^2 _{\infty}+{\underline M}^{-1}\vert\vert\lambda(\mathcal{B} \,\Gamma\, \mathcal{B}^\top)\vert\vert_{\infty}\right)\\ \nonumber
\text{subject to}  \quad&  P^*=\mathcal{B}\,\Gamma\, \mathcal{B}^\top \theta^*, \\ \nonumber
&\Gamma_{ii}>0,\quad \Gamma_{ij}=0,\quad i,j\in\mathcal{V} \\ \nonumber
& \underline{b}_{e}  \leq \Gamma_{ii} \leq \overline{b}_{e}, \; \quad \sum_{e\in\mathcal{E}} c_{e}(b_{e}) = K \\ 
&\mathcal{B}=\left\{ \begin{array}{ll}
                  \mathcal{B}_{i,e}=1,\\ 
                  \mathcal{B}_{j,e}=-1,\; e=(i,j),\; i,j\in\mathcal{V}\\ \nonumber
                \mathcal{B}_{k,e}=0,\; k\notin \{i,j\}\\ \nonumber
                \end{array}
              \right.\; \\ \nonumber
&\mathcal{B}\in\real^{n\times m}, \, \Gamma\in\real_+^{m\times m}, \, P^*\in\real^n
\end{align}
where $\vert\vert\cdot \vert\vert_\infty$ is the maximum vector norm. The cost function in \eqref{eq: op3} accounts for the worst-case eigenvalue $\lambda_{max}$, whereas the cost function in \eqref{eq: op1} and \eqref{eq: op2} accounts for the sum over all the eigenvalues of the Laplacian.

The difference between the cost functions in \eqref{eq: op1}, \eqref{eq: op2} and in \eqref{eq: op3} can be derived from $\norm{\lambda}_\infty \leq\norm{\lambda}_2\leq \norm{\lambda}_1\leq n \norm{\lambda}_\infty$. 
This shows that with  a smaller number of generation units~$n$, we can get a better estimate of \eqref{eq: op1} and \eqref{eq: op2}, using the cost function \eqref{eq: op3}. 




From the bounds in \eqref{eq:bound}, and $\vert \vert \mathcal{G}\vert\vert_2^2 - \vert \vert \mathcal{G}_{hom}(\underline M,\underline D)\vert\vert_2^2\leq \vert \vert \mathcal{G}_{hom}(\underline M,\underline D)\vert\vert_2^2 - \vert \vert \mathcal{G}_{hom}(\overline M,\overline D)\vert\vert_2^2 $, we estimate the gap between the value function $\vert\vert \mathcal{G}_{hom}(\underline D, \underline M)\vert\vert_2^2$ in \eqref{eq: op1} and that of $\mathcal{H}_2$ norm of the heterogeneous system $\vert\vert \mathcal{G}\vert\vert_2^2$ in \eqref{eq:def-H2}, as follows,
\begin{align*}
   \!\!\!\vert \vert \mathcal{G}\vert\vert_2^2 -\!\!\! \vert \vert \mathcal{G}_{hom}(\underline M,\underline D)\vert\vert_2^2 \leq \!\!\!
    \frac{\gamma}{2\;\overline D \cdot\underline D}\!\!\left(\delta _D \norm{\lambda(L)}_2^2+\!\!\!\frac{\delta_{MD}}{\underline{M}\cdot\overline{M}}\norm{\lambda(L)}_1 \!\!\right)&
\end{align*}
where $\delta_D=\overline D-\underline D$ and $\delta_{MD}=\overline D\;\overline M-\underline M \;\underline D$.

We make the following remarks: The further apart the inertia $m_i$, and the damping $d_i$ with $i=1\dots n$, of the individual generation units (synchronous machines or DC/AC converters with droop control) are spread, the wider the difference between the $\mathcal{H}_2$ norm of the heterogeneous and homogeneous system will get. 
Note also that less connectivity of the network (in the sense of the smallest positive eigenvalue $\lambda_2>0$, also termed Fiedler eigenvalue), implies smaller 1- and 2-norm of the eigenvalue vector $\lambda$, and smaller difference between the two $\mathcal{H}_2$ norms. One can deduce that {\em sparsity promotes homogeneity}: in a sparse power system network, $\mathcal{H}_2$ norm of parameter homogeneous system is a good approximation of the system performance.
Finally, we note also that the gap between the two norms decreases with smaller diffusion parameter $\gamma$. 

While we restrict our attention to small-signal models (linearized models in \eqref{eq: lin-swing}), it is noteworthy that network connectivity encoded in the node-incidence $\mathcal{B}$ and line susceptance matrix $\Gamma$, plays a determinant role in achieving synchronization in non-linear power system models. In this case, a trade-off must be taken into consideration in the design of the matrices $\Gamma$ and $\mathcal{B}$, see e.g. \cite{russo2018synchronization}.
}

{ Finally, we extend the optimal inertia allocation problem in \cite{poolla2017optimal}, with addition to the allocation of the damping coefficients while respecting power sharing}. { For this}, assume that the total amount of power associated with synchronous machines at steady state, given by $\sum_{i=1}^{n_G} P^*_{{ G},i}$ is negative, { where $P_{{ G},i}^*$ is the steady state power at the $i$-th machine and $n_G$ is the total number of generators}. This guarantees that the power balance constraint in the next optimization problem is feasible, and can be satisfied, by including resistive load models, e.g., absorbed in the lines as in \cite{tegling2015price}, and not only inductive (in which case, the power flowing from the generation into the load is negative, by common convention, see  \cite{simpson2013synchronization}).
For fixed values of inertia and damping coefficients of synchronous machines, a total monetary budget, operating capacity constraints, and prescribed power sharing ratios, we aim in the remainder to determine the optimal distribution of inertia ${m}_{C,i}$ and damping ${d}_{C,i}$ values, among $n_C$ DC/AC converters, that would minimize~\eqref{eq:def-H2}. This amounts to the following optimization problem:
\begin{align}
\label{eq: op4}
\min_{m_{C,i}, d_{C,i} }\quad &   \text{trace}(C^\top P\, C) \\
\text{subject to}&  \quad  \underline{M}_i \leq m_{C,i} \leq \overline M_i, \qquad  \text{[capacity  constraints]} \nonumber \\  \nonumber
&\quad  \underline{D}_i \leq d_{C,i} \leq \overline D_i \nonumber\\ \nonumber 
& \quad \sum_{i=1}^{n_C} m_{C,i}= K \qquad \quad  \text{[budget  constraints]} \\\nonumber
& \quad \sum_{i=1}^{n_C} P^*_{{ C},i}=\overline P \qquad \quad \text{[power balance]}  \\
& \quad  \frac{\vert P^*_{{ C},i}\vert }{d_{C,i}}=\frac{\vert P^*_{{ C},j}\vert }{d_{C,j}}  \qquad   \text{[power sharing]}\nonumber
\end{align}
 where $\overline P:=-\sum_{i=1}^{n_C} P^*_{{ G},i}$,  $\overline M_i, \, \underline{M}_i>0$ and $\overline D_i, \, \underline{D}_i>0$ correspond respectively to the individual maximal and minimal inertia and damping, for the DC/AC converter at the $i$-th station, $K$ represents budget constraints, and  $r={\vert P^*_{{G},i}\vert}/{d_{G,i}}={\vert P^*_{{ G},j}\vert}/{d_{G,j}}$, for all $i,j=1,\cdots ,n_G$, prescribes power sharing ratio. 
 



\section{Numerical simulations}
\label{sec:sims}
\begin{figure}[h!]
    \centering
    \includegraphics[trim=0cm 5cm 0cm 3.5cm, clip=true ,  scale=0.21]{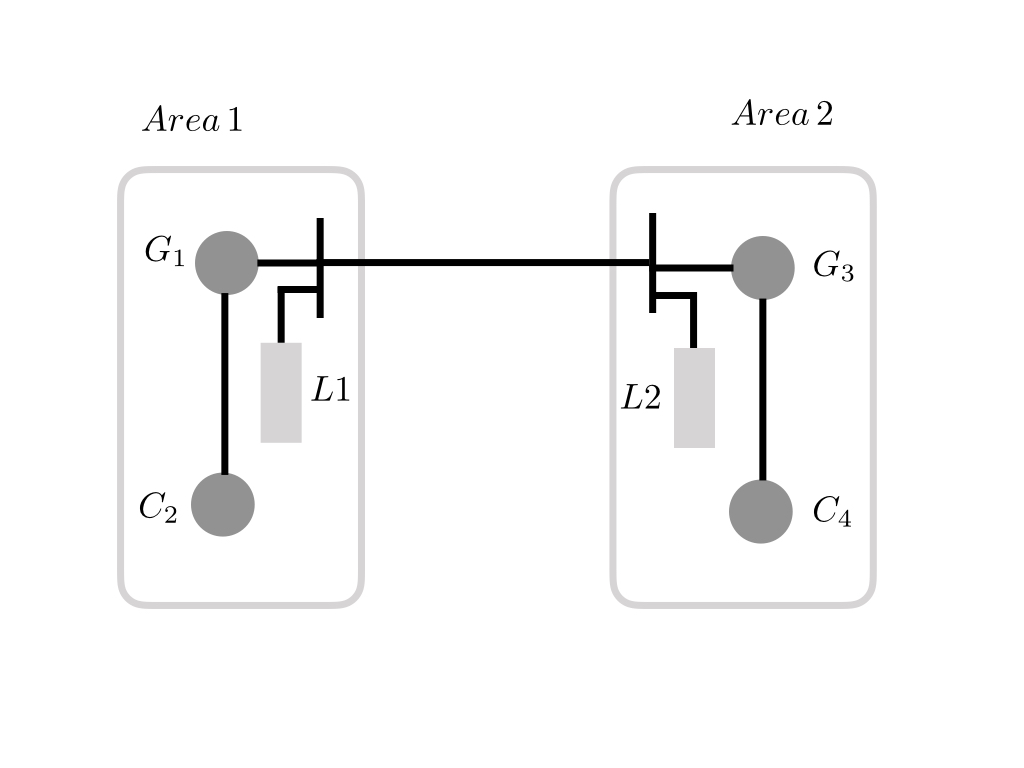}
    \caption{Kundur's four-machine two-area power system composed of two areas, each comprising two machines attached to a load with constant impedance. {The generators $G_2$ and $G_4$ are replaced by DC/AC converters $C_2$ and $C_4$, respectively (in closed-loop with matching control).} All machines and transmission line parameters can be found in~\cite{kundur1994power}.}
    \label{fig:my_label}
\end{figure}
We adopt the linearized MATLAB model of Kundur's four-machine two-area system, depicted in Figure \ref{fig:my_label}, with parameters (in p.u.) from \cite{kundur1994power} and adapt it to our setup. In particular, we replace $G_2$ by DC/AC converter $C_1$ and $G_4$ by DC/AC converter $C_2$, both in closed-loop with matching control { (known to have droop control properties \cite{arghir2018grid})}, index each machine by its area number (1 or 2), set the lines to be purely inductive, integrate continuous correlated noise with intensity $\gamma=0.05$.
The initial conditions  are uncorrelated and normally distributed with mean $\xi_0=\begin{bmatrix}93.077,69.3918,56.5361,45.6552\end{bmatrix}^\top $ and variance $\Sigma_0=\left[\begin{smallmatrix} \sqrt{0.07} \, I_2 & 0 \\ 0 &  \sqrt{0.01}\,  I_2 \end{smallmatrix}\right]$. 
{ The assumption on negative total machines power is satisfied with $\overline P=-P^*_{G1}-P^*_{G2}=0.7778+0.798889=1.5767$, due to the presence of the load $L_1$ and $L_2$. This corresponds to a nominal operation for the synchronous machines $G_1$ and $G_3$ as given by \cite{kundur1994power}.}

We search for the optimal inertia $m_{C,i}$ (in [MW $s^2$/rad]) and the optimal damping $d_{C,i}$ (in [MW s/rad]), minimizing \eqref{eq: op4}, with $n=4$ and $n_C=2$. The upper bounds on inertia and damping are given by $\overline{M_i}=\frac{\vert P_{max,i}\vert}{\max_{t\geq 0}\vert\dot \omega_i(t)\vert}$ and $\overline{D_i}=\frac{\vert P_{max,i}\vert}{\max_{t\geq 0}\vert\omega_i(t)\vert}$, as in \cite{poolla2019placement}, where we denote by $P_{max,i}$ the maximal power of the $i$-th converter. The minimal values are chosen, so that $\underline{M}_1=\underline{D}_1=10$ and $\underline{M}_2=\underline{D}_2=5$, where $m_{C,1}+m_{C,2}=120,\, d_{C,1}+d_{C,2}=40$ { (in SI)}. { By solving} \eqref{eq: op4} using the algorithm from \cite{poolla2017optimal} and MATLAB function $\tt{fmincon}$, { we}  arrive at the optimal inertia and damping values: $m^*_{C,1}=50.00139,\, m^*_{C,2}=69.9987$ and $d_{C,1}^*= 5.0001$ and $d_{C,2}^*=34.9999$. This amounts to { the} $\mathcal{H}_2${ -norm} value of $0.1630$. If we instead allocate the damping and inertia uniformly according to $ \hat m_{C,1}=\hat m_{C,2}=60$ and $ \hat d_{C,1}=\hat d_{C,2}=20$, then the $\mathcal{H}_2$ norm is $0.3217$, which agrees with our predictions from Section~\ref{sec:optimal_H2}. 

{ Figures \ref{fig: area1} and \ref{fig: area2} show the frequency response in simulations at each of the generation units, under spatially correlated noise as in \eqref{eq: lin-swing}. Frequency transients infer power system losses incurred by the generation units to return to synchrony. 
A synchronization at the individual areas (1 and 2) is observed at all the plots, followed by a synchronization at all generation units at the steady state frequency $\omega^*=1$ p.u. 
By comparing the subplots in Figures \ref{fig: area1} and \ref{fig: area2}, representing the frequencies at the converters $\omega_{C,1}$ and $\omega_{C,2}$ (plotted against the frequency of the machines $\omega_{G,1}$ in Area 1 and $\omega_{G,2}$ in Area 2), the optimal allocation of inertia and damping $(m^*_{C,1}, d^*_{C,1})$ at the converters $C_1$ and $(m^*_{C,2}, d^*_{C,2})$ at the converters $C_2$, resulting from solving \eqref{eq: op4}, allows for significantly better transients for both converters, that uniform damping and inertia gains $(\hat m_{C,1}, \hat d_{C,1})=(\hat m_{C,2}, \hat d_{C,2})$ does not achieve.}  
\begin{figure}[h!]
    \centering
    \includegraphics[scale=0.41, trim={0 10cm 0 10cm},clip]{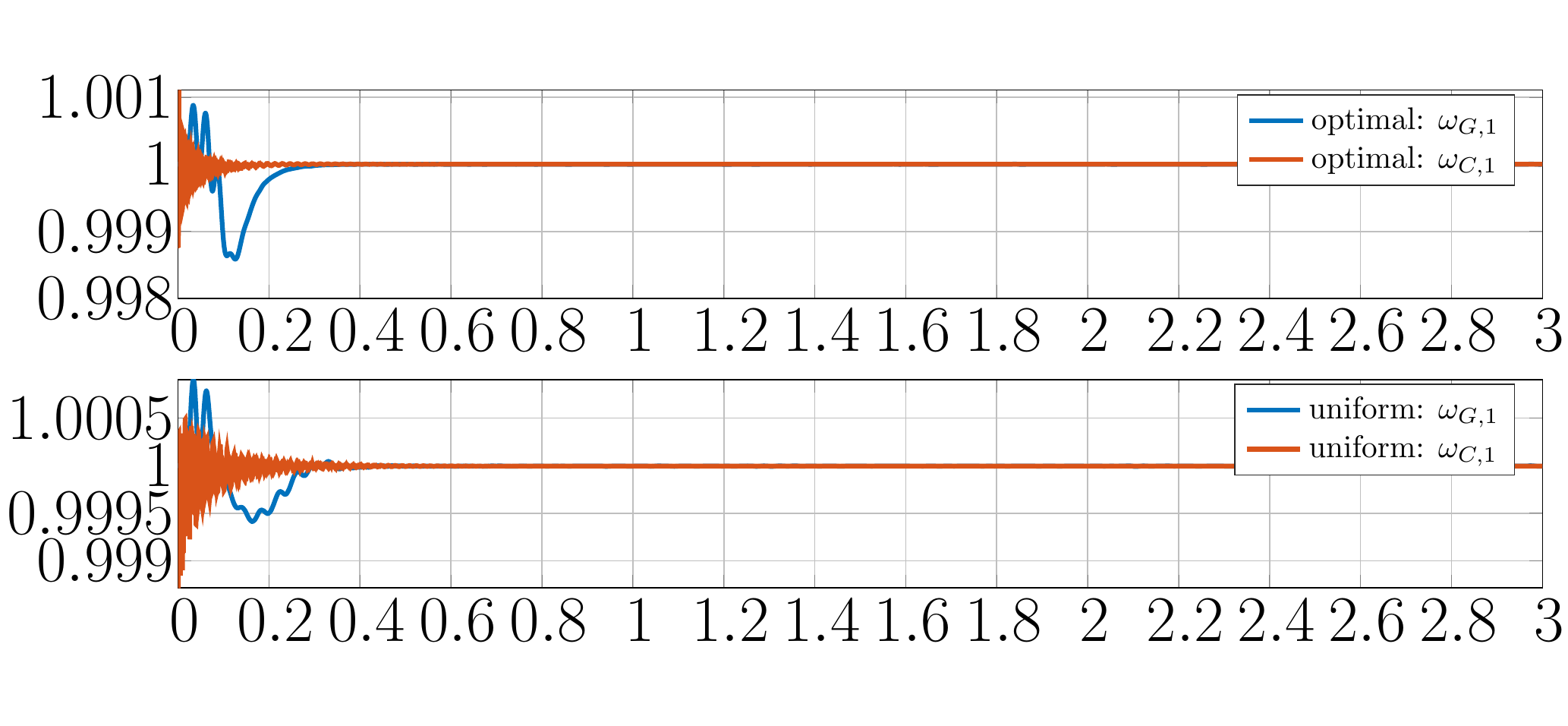}
    \caption{ Frequency responses over time of the synchronous machine $G_1$ and DC/AC converter $C_2$ (in p.u.) in area 1 are plotted in red and blue, respectively, corresponding to two different ways of choosing the inertia and damping for the DC/AC converters. The optimal droop control parameters $(m^*_{C,i},d^*_{C,i}), i=1,2$ solve the optimization problem \eqref{eq: op4} for DC/AC converter, whereas the uniform inertia and damping imply that $(\hat m_{C,1}, \hat d_{C,1})=(\hat m_{C,2}, \hat d_{C,2})$. Optimally allocated inertia $m^*_{C,1}$ and damping $d^*_{C,1}$ improve the system performance by resulting into better transients, and hence less power system losses in area 1. Synchronous machine's inertia and damping are kept fixed.}
   \label{fig: area1}
\end{figure}

\begin{figure}[h!]
    \centering
    \includegraphics[scale=0.41, trim={0 10cm 0 10cm},clip]{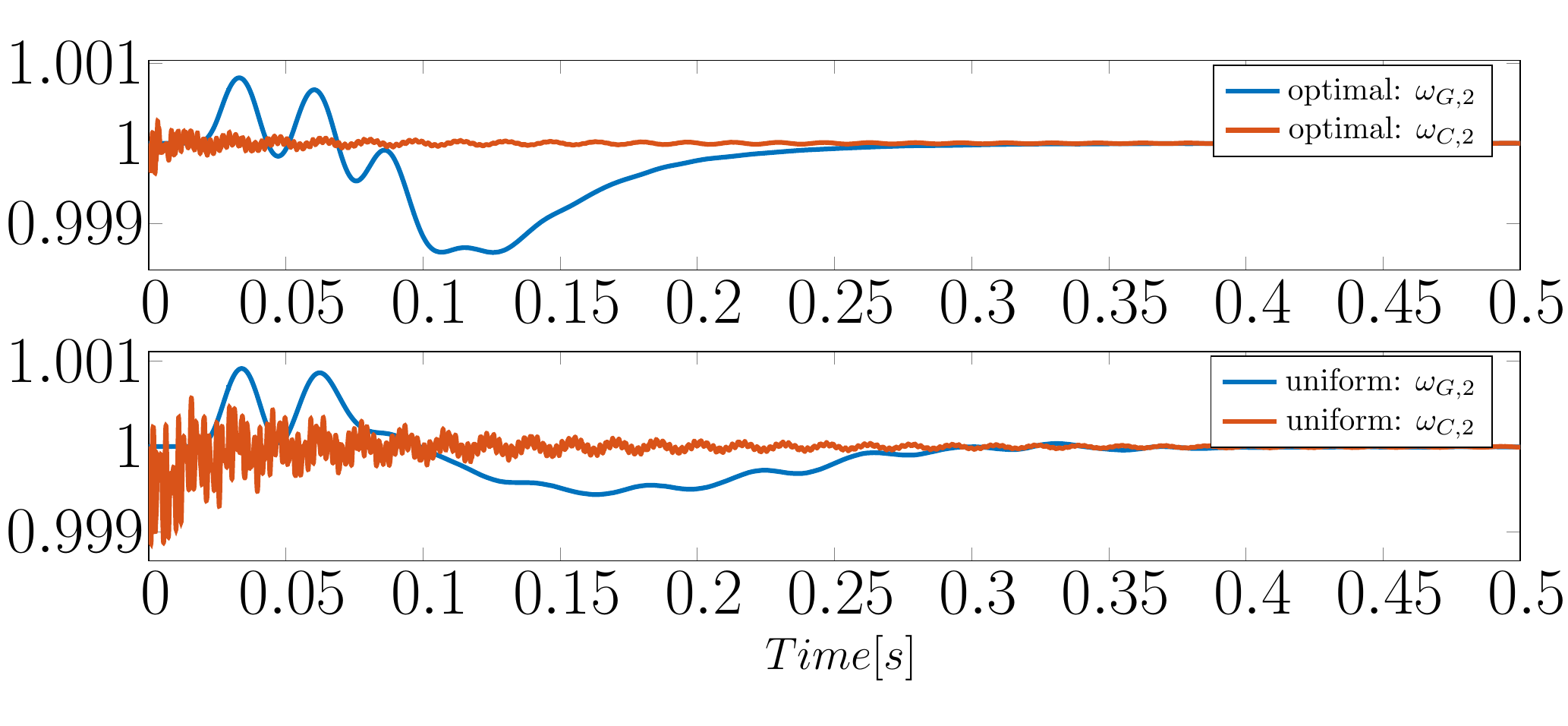}
    \caption{ Frequency responses over time of synchronous machine $G_3$ and DC/AC converter $C_4$ (in p.u.) in area 2 are plotted in red and blue, respectively, with two subplots: First, optimally allocated inertia $m^*_{C,2}$ and damping $d^*_{C,2}$ which improve the system performance by resulting into less transient magnitudes, and hence less power system losses in area 2;  Second, uniform inertia and damping, with $(\hat m_{C,1}, \hat d_{C,1})=(\hat m_{C,2}, \hat d_{C,2})$. Synchronous machine's inertia and damping are kept fixed.
    }
    \label{fig: area2}
\end{figure}

\section{Conclusions}
 Starting from an SDE model for heterogeneous power networks with non-uniform inertia and damping and subject to correlated noise with random initial conditions, we examined the overall network performance { by finding bounds on} $\mathcal{H}_2$ norm. Then, for the mixed-power generation setup consisting of DC/AC converters in closed-loop with { droop control} and synchronous machines, we formulated different scenarios for performance optimization under the derived { bounds on} $\mathcal{H}_2$ norm. Our simulations showcase our findings for the optimal inertia and damping allocation. Future directions include the investigation of network performance for more detailed models, and the study of (approximation) solutions to the proposed optimization schemes.





\section*{ACKNOWLEDGMENT}
The authors would like to thank Dr. Emma Tegling for the insightful comments and discussions.

\bibliographystyle{IEEEtran}
\bibliography{root}

\end{document}